\def\BState{\State\hskip-\ALG@thistlm}
\def\downbar#1{
\setbox10=\hbox{$#1$}
            \dimen10=\ht10 \advance\dimen10 by 2.5pt
            \ifdim \dimen10<15pt 
               \advance\dimen10 by -0.5pt
               \dimen11=\dimen10
               \advance\dimen10 by 2.5pt
               \lower \dimen11
            \else \lower \ht10 \fi
            \hbox {\hskip 1.5pt \vrule height \dimen10 depth \dp10}}
\def\upbar#1{
\setbox10=\hbox{$#1$}
            \dimen10=\ht10 \advance\dimen10 by \dp10 \advance\dimen10 by 2.5pt
            \ifdim \dimen10<15pt 
                \advance\dimen10 by 2pt \fi
            \raise 2.5pt \hbox {\hskip -1.5pt \vrule height \dimen10}}
\newtheorem{definition}{\bf Definition}[section]
\newtheorem{theorem}{\bf Theorem}[section]
\newtheorem{lemma}{\bf Lemma }[section]
\newtheorem{remark}{\bf Remark}[section]
\newtheorem{conj}{\bf Conjecture}[section]
\numberwithin{equation}{section}
\begin{document}

\title[On discrete coherent pairs of measures]
{On discrete coherent pairs of measures}
\author{R. \'Alvarez-Nodarse}
\address{University of Seville, Department of Mathematical Analysis, Spain}
\email{ran@us.es}
\author{K. Castillo}
\address{University of Coimbra, CMUC, Department of Mathematics, 3001-501 Coimbra, Portugal}
\email{kenier@mat.uc.pt}
\author{D. Mbouna}
\address{University of Coimbra, CMUC, Department of Mathematics, 3001-501 Coimbra, Portugal}
\email{dmbouna@mat.uc.pt}
\author{J. Petronilho}
\address{University of Coimbra, CMUC, Department of Mathematics, 3001-501 Coimbra, Portugal}

\subjclass[2010]{42C05, 33C45}
\date{\today}
\keywords{Orthogonal polynomials, coherent pairs of measures, semiclassical orthogonal polynomials}
\begin{abstract}
In [Castillo \& Mbouna, Indag. Math. {\bf 31} (2020) 223-234], the concept of $\pi_N$-coherent pairs of order $(m,k)$ with index $M$ is introduced. This definition, implicitly related with the standard derivative operator, automatically leaves out the so-called discrete orthogonal polynomials. The purpose of this note is  twofold: first we use the (discrete) Hahn difference operator and rewrite the known results in this framework; second, as an application, we describe exhaustively the (discrete) self-coherent pairs in the situation whether $M=0$, $N\leq2$, and $(m,k)=(1,0)$. 
This is proved by describing in a unified way the classical orthogonal polynomials with respect to Jackson's operator as special or limiting cases of a four parametric family of $q$-polynomials. This gives a partial answer to a conjecture posed by M. E. H Ismail in his monograph [Classical and quantum orthogonal polynomials in one variable, Cambridge University Press, 2005].
\end{abstract}
\maketitle

\section{Introduction and Preliminaries}

Following the ideas  presented in \cite{CD20}, this work provides an additional input to the theory of coherent pairs of measures in the framework of discrete orthogonal polynomials. The concept of coherent pairs of measure was introduced by Iserles, Koch, N{\o}rsett, Sanz-Serna in $1991$ in the context of the so-called Sobolev orthogonal polynomials and it was extensively studied after that (see, for instance, the introduction in \cite{APPR15,dJ14} and references therein). Our aim is not only to state appropriate $(q,w)$-analogues of the results presented in \cite{CD20}, but also to show how some known results can be easily derived using the ones presented here combined with some other ones stated in our recent work \cite{Nossa}.  
To achieve this we will introduce a four parametric family of $q$-polynomials that contains as a special or limiting case all the classical OPS with respect to Jackson's operator (i.e., the so-called $q$-classical polynomials).
We suppose that the reader has the papers \cite{Nossa, CD20} at hand and, as far as possible, we shall use the notation and definitions therein, including the basic facts of the algebraic theory of orthogonal polynomials stated by P. Maroni \cite{M91} (see also \cite{P18} for a recent survey on the subject).  Throughout this work we will use the abbreviations OP and OPS for Orthogonal Polynomial(s) and Orthogonal Polynomial(s) Sequence(s), respectively. Let $(P_n)_{n\geq0}$ and $(Q_n)_{n\geq0}$ be two monic OPS and let $\textbf{u}$ and $\textbf{v}$ be the regular linear functionals with respect to which $(P_n)_{n\geq0}$ and $(Q_n)_{n\geq0}$ are orthogonal. In \cite{CD20} the authors consider the following structure relation:
\begin{equation}\label{1ba}
\pi_N(x)P_n^ {[m]}(x) = \sum_{j=n-M}^{n+N} c_{n,j} Q_j ^{[k]}(x),\quad n=0,1,\ldots,  
\end{equation}
where $M$ and $N$ are fixed non-negative integer numbers, $\pi_N $ is a monic polynomial of degree $N$
(hence $c_{n,n+N}=1$ for each $n$), and $P_{n}^{[m]}(x):=\frac{m!}{(n-m)!}\frac{{\rm d}^m}{{\rm d}x^m}\,P_{n+m}(x)$ being the standard notation for the monic $m$-derivatives. We also consider the convention $Q_j\equiv0$ if $j<0$. 
Moreover, it is assume that the following conditions hold:
\begin{equation}\label{cond1}
c_{n,n-M} \neq 0\quad\mbox{\rm if}\quad n\geq M\;.
\end{equation}
Motivated by the preceding literature on the subject the following definition is stated in \cite{CD20}:
\begin{definition}\label{Def1}
Let $M$ and $N$ be non-negative integer numbers and let
$\pi_N$ be a monic polynomial of degree $N$. If $(P_n)_{n\geq0}$ and $(Q_n)_{n\geq0}$
are two monic OPS such that their normalized derivatives of orders $m$ and $k$ (respectively)
satisfy $(\ref{1ba})$ and $(\ref{cond1})$, we call $\big((P_n)_{n\geq0},(Q_n)_{n\geq0}\big)$,
as well as the corresponding pair $({\bf u},{\bf v})$ of regular linear functionals, a
$\pi_N$-coherent pair with index $M$ and order $(m,k)$.
\end{definition}

Let us define the so-called Hahn operator $D_{q,\omega}: \mathcal{P} \to \mathcal{P}$ by
\begin{align*}
(D_{q,\omega} f)(x) = \frac{f(qx+\omega)-f(x)}{(q-1)x+\omega}, \end{align*}
where we assume that $q,\omega \in \mathbb{C}$, and
$
q\not\in\{0,1,e^{2ij\pi/n}\;|\;1\leq j\leq n-1\;,\; n=2,3,\ldots\}.
$
Consider (\ref{1ba}) redefining the derivatives as ``discrete'' derivatives,
\begin{equation}\label{Snm}
S_{n}^{[m]}:=\frac{ [n]_q !}{[n+m]_q !} \; D_{q,\omega} ^m S_{n+m}.
\end{equation}
Here we use the standard notation
\begin{align*}
[0]_q ! :=1, \quad [n]_q!:=\prod_{j=1} ^{n} [j]_q, \quad [n]_q:=\frac{q^n -1}{q-1}, \quad  n=1,2,\dots.
\end{align*}
This leads to the concept of discrete $\pi_N$-$(q,w)$-coherent pair with index $M$ and order $(m,k)$,
defined as in Definition \ref{Def1} with the obvious modification; that is,
replacing in \eqref{1ba} the standard derivatives by the discrete ones \eqref{Snm}.

In the rest of the section we summarize some basic facts. 
(For more details see \cite{Nossa} and references therein.)
Given a simple set of polynomials $(R_n)_{n\geq0}$,
the corresponding dual basis is a sequence of linear functionals
${\bf e}_n:\mathcal{P}\to\mathbb{C}$ such that
$$
\langle{\bf e}_n,R_j\rangle:=\delta_{n,j},\quad n,j=0,1,\ldots.
$$
The operator ${D}_{q,\omega}$ induces an operator 
${\bf D}_{q,\omega}:\mathcal{P}^*\to\mathcal{P}^*$, given by
\begin{align*}
\langle{\bf D}_{q,\omega}{\bf u},f\rangle:=
-q^{-1}\langle{\bf u},D_{1/q,-\omega/q} f\rangle.
\end{align*}
The sequence $(R^{[k]} _n)_{n\geq 0}$ is a simple set of polynomials and its associated dual basis $(\textbf{e} ^{[k]}_n )_{n \geq 0}$ satisfies
\begin{align}\label{Der}
\mathbf{D}_{1/q,-\omega /q} ^{k} \left( \textbf{e}_n ^{[k]} \right) = (-q)^k \frac{[n+k]_q !}{[n]_q !}\textbf{e}_{n+k}, \quad n,k=0,1,\dots.
\end{align}
We also define
$L_{q,\omega}:\mathcal{P}\to\mathcal{P}$ and ${\bf L}_{q,\omega}:\mathcal{P}^*\to\mathcal{P}^*$ by
\begin{align*}
L_{q,\omega}f(x):=f(qx+\omega),\quad
\langle {\bf L}_{q,\omega}{\bf u},f\rangle:=\langle {\bf u},L_{1/q,-\omega/q}f\rangle.
\end{align*}
Throughout the paper the following properties will be used
\begin{align*}
& D_{1/q,-\omega/q} L_{q,\omega}=q L_{q,\omega} D_{q,\omega},\quad  L_{1/q,-\omega/q}L_{q,\omega}=I, \\[7pt]
& {\bf D}_{q,\omega} (f \textbf{u})=D_{q,\omega}f ~\textbf{u} + L_{q,\omega}f ~ {\bf D}_{q,\omega}\textbf{u}
\;,\quad {\bf L}_{q,\omega} (f\textbf{u})=L_{q,\omega}f {\bf L}_{q,\omega}(\textbf{u}),\\[7pt]
& {\bf D}_{q,\omega} ^n (f\textbf{u})=\sum_{j=0} ^n \begin{bmatrix}
n \\
j
\end{bmatrix} _q L_{q,\omega} ^{n-j} \left(D_{q,\omega} ^{j} f  \right){\bf D}_{q,\omega}^{n-j} \textbf{u}
=\sum_{j=0} ^n \begin{bmatrix}
n \\
j
\end{bmatrix} _q L_{q,\omega} ^{j} \left(D_{q,\omega} ^{n-j} f  \right){\bf D}_{q,\omega}^{j} \textbf{u}
\end{align*}
for each $n=0,1,\ldots$. The last property is {\it Leibniz's formula}.

A linear regular functional $\textbf{u} \in \mathcal{P}^*$ is a $D_{q,\omega}$-semiclassical functional if it is regular and there exist $\phi ,\psi \in \mathcal{P}$, with $deg ~\psi \geq 1$, such that
\begin{align}\label{DPearson}
{\bf D}_{q,\omega} \left( \phi \textbf{u}\right) =\psi \textbf{u}.
\end{align}
The class of a $D_{q,\omega}$-semiclassical functional $\textbf{u}$, denoted by $\mathfrak{s}({\bf u})$, is the unique non-negative integer number defined by
$$
\mathfrak{s}({\bf u}):=\min_{(\phi,\psi)\in\mathcal{A}_{\bf u}}\max\big\{\deg\phi-2,\deg\psi-1\big\}\;,
$$
where $\mathcal{A}_{\bf u}$ is the set of all pairs of nonzero polynomials $(\phi,\psi)$ fulfilling the functional equation (\ref{DPearson}). When $\mathfrak{s}({\bf u})=0$, $\bf u$ is said to be a $D_{q,\omega}$-classical functional. We also say that the corresponding OPS is $D_{q,\omega}$-semiclassical of class $\mathfrak{s}({\bf u})$ or $D_{q,\omega}$-classical, respectively.
Notice that
\begin{align*}
{\bf D}_{q,\omega} (\phi \textbf{u}) =\psi \textbf{u} ~ \Leftrightarrow ~ {\bf D}_{1/q,-\omega/q} ({\widehat{\phi}} \textbf{u}) =\psi \textbf{u},
\end{align*}
where $\widehat{\phi}(x):=q^{-1} \left[ \phi (x)+\left((q-1)x+\omega  \right)\psi (x) \right]$. Consequently, a regular linear functional $\textbf{u}$ is  $D_{q,\omega}$-semiclassical if and only if it is  $D_{1/q,-\omega/q}$-semiclassical.

The structure of the paper is as follows.
In Section \ref{Section-main} we state the discrete analogues of the results presented in \cite{CD20}. These results are new in the literature. 
In Section \ref{Section-example} we present a novel application in the framework of $\pi_N$-$(q,w)$-coherence
of index $0$ and order $(1,0)$, being $N\leq2$ and considering $P_n=Q_n$ for each $n=0,1,2,\dots$ (self-coherence). This idea is new and unprecedented. This allow us to prove a $(q,\omega)$-analogue of the well knwon Al-Salam and Chihara characterization of classical orthogonal polynomials. 
We also describe in a unified way (up to an affine change of variable) all the $q$-classical OPS, by splitting them into two large families of $q$-polynomials, $(L_n(\cdot;a,b,c|q))_{n\geq0}$ and $(J_n(\cdot;a,b,c,d|q))_{n\geq0}$. As a matter of fact, even $L_n(\cdot;a,b,c|q)$ can be obtained as a special or limiting case of $J_n(\cdot;a,b,c,d|q)$.  

\section{$(q,w)$-analogues of $\pi_N$-coherent pairs}\label{Section-main}

In this section we built the $(q,w)$-analogues of the results obtained in \cite{CD20}.
For doing that we follow the ideas presented in \cite{CD20}.
But first, and this is a delicate point, we need to define the appropriate $(q,w)$-analogues of the formulas appearing therein.\footnote{Indeed, as B. Berndt emphasized in a slightly different context (see \cite[Section 4]{B2010}), there are several ways to construct the so-called $q$-analogues and, \textit{a fortiori}, the $(q,w)$-analogues.}
After that, it is straightforward to prove our results following the arguments used in \cite{CD20}.

\begin{lemma}\label{lemma-main}
Let $\big((P_n)_{n\geq0},(Q_n)_{n\geq0}\big)$ be a $\pi_N$-$(q,\omega)$-coherent pair with index $M$ and order $(m,k)$,
so that $(\ref{1ba})$ and $(\ref{cond1})$ hold. Set
\begin{align}
\psi(x;n)&:=\sum_{j=n-N}^{n+M}   \frac{(-q)^m [j+m]_q ! }{[j]_q ! \langle {\bf u}, P_{m+j} ^2\rangle} c_{j,n} P_{m+j}(x), \label{si}\\[7pt]
 \label{fi}\phi(x;n,j)&:= \frac{(-q)^k [n+k]_q !}{[n]_q ! \langle{\bf v},Q_{n+k}^2\rangle}\sum_{\ell=0}^{N-j} \begin{bmatrix}
k+n \\
\ell
\end{bmatrix}_{q^{-1}} \begin{bmatrix}
N-\ell \\
N-j-\ell
\end{bmatrix} _{q^{-1}} \\[7pt]
&\nonumber \quad \times L_{1/q,-\omega/q} ^{ k+N-\ell} \left(D_{1/q,-\omega/q} ^{\ell} \pi_N  \right)(x)
\,L_{1/q,-\omega/q} ^{ j} \left(D_{1/q,-\omega/q} ^{ n-j-\ell} Q_{n+k}  \right)(x),
\end{align}
for all $\,n=0,1,\ldots$ and $j=0,1,\ldots,N$, so that
$$\deg\psi(\cdot;n)=m+n+M\;,\quad\deg\phi(\cdot;n,j)=k+n+j\;.$$
Let ${\bf u}$ and ${\bf v}$ be the regular linear functionals
with respect to which $(P_n)_{n\geq0}$ and $(Q_n)_{n\geq0}$ are orthogonal.
Then the following functional equations hold:
\begin{align}
& \psi(\cdot;n){\bf u}={\bf D}_{1/q,-\omega/q} ^{ m-k-N}\Big(\sum_{j=0}^{N}\phi(\cdot;n,j){\bf D}_{1/q,-\omega/q}^{j}{\bf v}\Big)
& \mbox{\rm if}\quad m \geq k+N, \label{2a} \\
& {\bf D}_{1/q,-\omega/q} ^{ k+N-m}\big(\psi(\cdot;n){\bf u}\big)=\sum_{j=0}^{N}\phi(\cdot;n,j){\bf D}_{1/q,-\omega/q}^{j} {\bf v}
& \mbox{\rm if}\quad m< k+N, \label{2b}
\end{align}
for all $n=0,1,\ldots$.
\end{lemma}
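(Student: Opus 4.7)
The plan is to establish the master identity
\begin{equation*}
\psi(\,\cdot\,;n)\,\mathbf{u} \;=\; \mathbf{D}_{1/q,-\omega/q}^{m}\bigl(\pi_N\,\mathbf{b}_n\bigr),
\end{equation*}
where $(\mathbf{b}_n)_{n\geq0}$ denotes the dual basis of $(Q_n^{[k]})_{n\geq0}$ in $\mathcal{P}^*$, and then to expand the right-hand side via two nested applications of Leibniz's formula. Once the master identity is in place, both \eqref{2a} and \eqref{2b} follow by splitting $m$ relative to $k+N$.

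First I would prove the master identity. Let $(\mathbf{a}_j)_{j\geq0}$ denote the dual basis of $(P_j^{[m]})_{j\geq0}$. Pairing \eqref{1ba} with $\mathbf{b}_n$ and using $\langle\mathbf{b}_n,Q_i^{[k]}\rangle=\delta_{n,i}$ gives $c_{j,n}=\langle\pi_N\mathbf{b}_n,\,P_j^{[m]}\rangle$, which is non-zero only for $n-N\leq j\leq n+M$. Consequently,
\begin{equation*}
\pi_N\,\mathbf{b}_n \;=\; \sum_{j=n-N}^{n+M} c_{j,n}\,\mathbf{a}_j.
\end{equation*}
Applying $\mathbf{D}_{1/q,-\omega/q}^{m}$ and invoking \eqref{Der} with $R_n=P_n$ to rewrite each $\mathbf{D}_{1/q,-\omega/q}^{m}\mathbf{a}_j$ as $(-q)^m[j+m]_q!/[j]_q!\cdot P_{j+m}\mathbf{u}/\langle\mathbf{u},P_{j+m}^2\rangle$ recovers $\psi(\,\cdot\,;n)\mathbf{u}$ on the right by \eqref{si}.

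It then suffices to prove the single identity
\begin{equation*}
\mathbf{D}_{1/q,-\omega/q}^{k+N}\bigl(\pi_N\,\mathbf{b}_n\bigr) \;=\; \sum_{j=0}^{N}\phi(\,\cdot\,;n,j)\,\mathbf{D}_{1/q,-\omega/q}^{j}\mathbf{v}.
\end{equation*}
Indeed, if $m\geq k+N$ then applying $\mathbf{D}_{1/q,-\omega/q}^{m-k-N}$ and combining with the master identity yields \eqref{2a}, while if $m<k+N$ then applying $\mathbf{D}_{1/q,-\omega/q}^{k+N-m}$ directly to the master identity yields \eqref{2b}. To prove the displayed identity, I would expand its left-hand side by Leibniz's formula; since $\deg\pi_N=N$, only the $N+1$ terms with $D_{1/q,-\omega/q}^{s}\pi_N\not\equiv 0$ survive. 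In each surviving term, the factor $\mathbf{D}_{1/q,-\omega/q}^{k+t}\mathbf{b}_n$ is rewritten via \eqref{Der} applied to $(Q_n)$ as $(-q)^k[n+k]_q!/([n]_q!\langle\mathbf{v},Q_{n+k}^2\rangle)\cdot\mathbf{D}_{1/q,-\omega/q}^{t}(Q_{n+k}\mathbf{v})$. A second application of Leibniz's rule to $\mathbf{D}_{1/q,-\omega/q}^{t}(Q_{n+k}\mathbf{v})$, followed by a swap of summation order, reassembles the double sum into $\sum_{j=0}^{N}\phi(\,\cdot\,;n,j)\,\mathbf{D}_{1/q,-\omega/q}^{j}\mathbf{v}$, the inner summation producing the $q^{-1}$-binomial factor of \eqref{fi}.

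The main obstacle is bookkeeping. The three operators $D_{1/q,-\omega/q}$, $L_{1/q,-\omega/q}$, $\mathbf{D}_{1/q,-\omega/q}$ interact with two nested Leibniz expansions and several $L_{1/q,-\omega/q}$-shifts, and at each step one must choose the first or second version of Leibniz's rule (as stated in Section~1) so that the shift exponents and $q^{-1}$-binomial coefficients recombine into the precise form of $\phi(\,\cdot\,;n,j)$ in \eqref{fi} rather than merely an equivalent rearrangement. Beyond this careful algebraic matching, the argument is structurally the $(q,\omega)$-analogue of the one in \cite{CD20}.
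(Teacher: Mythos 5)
Your proposal is correct and follows essentially the same route as the paper: the dual-basis identity $\pi_N\,\mathbf{b}_n^{[k]}=\sum_{j=n-N}^{n+M}c_{j,n}\mathbf{a}_j^{[m]}$, the master identity $\mathbf{D}_{1/q,-\omega/q}^{m}(\pi_N\mathbf{b}_n^{[k]})=\psi(\cdot;n)\mathbf{u}$ via \eqref{Der}, the identity $\mathbf{D}_{1/q,-\omega/q}^{k+N}(\pi_N\mathbf{b}_n^{[k]})=\sum_{j=0}^{N}\phi(\cdot;n,j)\mathbf{D}_{1/q,-\omega/q}^{j}\mathbf{v}$ by two nested applications of Leibniz's formula, and the final split according to whether $m\geq k+N$ or $m<k+N$. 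The level of detail you leave to ``bookkeeping'' matches what the paper itself dispatches as ``straightforward calculations.''
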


\begin{proof}
Let $({\bf a}_n)_{n\geq0}$, $({\bf b}_n)_{n\geq0}$, $({\bf a}_n^{[m]})_{n\geq0}$,
and $({\bf b}_n^{[k]})_{n\geq0}$ be the dual basis corresponding to the simple sets of polynomials
$(P_n)_{n\geq0}$, $(Q_n)_{n\geq0}$, $(P_n^{[m]})_{n\geq0}$ and $(Q_n^{[k]})_{n\geq0}$, respectively. Then
\begin{equation}\label{rr}
\pi_N {\bf b}_n ^{[k]} = \sum_{j=n-N} ^{n+M} c_{j,n} {\bf a}_j ^{[m]}, \quad n =0,1,\ldots.
\end{equation}
From \eqref{Der} we get
\begin{equation}\label{basis}
{\bf D}_{1/q,-\omega/q} ^{m} \big(\pi_N {\bf b}_n ^{[k]}  \big) = \psi(\cdot;n)\textbf{u}, \quad n=0,1,\ldots.
\end{equation}
By Leibniz's formula, and since $D_{1/q,-\omega/q}^j \pi_N =0$ for $j>N$,  we deduce
\begin{align*}
&{\bf D}_{1/q,-\omega/q}^{ k+N} \big(\pi_N {\bf b}_n ^{[k]} \big)\\[7pt]
&=\frac{(-q)^k [n+k]_q !}{[n]_q ! \langle {\bf v}, Q_{n+k}^2 \rangle} \sum_{j=0}^{N}
\begin{bmatrix}
k+N \\
j
\end{bmatrix}_{q^{-1}}
L_{1/q,-\omega/q} ^{ k+N-j} \left(D_{1/q,-\omega/q} ^{ j} \pi_N \right) {\bf D}_{1/q,-\omega/q} ^{ N-j}(Q_{n+k} {\bf v}).
\end{align*}
Applying once again Leibniz's formula to ${\bf D}_{1/q,-\omega/q} ^{ N-j}(Q_{n+k} {\bf v})$, after some straightforward calculations, we find
\begin{equation}\label{EqDkN1}
{\bf D}_{1/q,-\omega/q} ^{k+N} \big(\pi_N {\bf b}_n ^{[k]} \big)
= \sum_{j=0}^{N}  \phi(\cdot;n,j) {\bf D}^{j}_{1/q,-\omega/q} {\bf v}.
\end{equation}
For $m \geq k+N$, rewriting  \eqref{basis} as
$$
\psi(\cdot;n){\bf u} ={\bf D}_{1/q,-\omega/q} ^{ m-k-N } {\bf D}_{1/q,-\omega/q} ^{ k+N} \big(\pi_N {\bf b}_n ^{[k]}  \big),\quad n=0,1,\ldots,
$$
and using (\ref{EqDkN1}), (\ref{2a}) follows.
For $m< k+N$, writing
$$
{\bf D}_{1/q,-\omega/q} ^{ k+N} \big(\pi_N {\bf b}_n ^{[k]} \big)={\bf D}_{1/q,-\omega/q} ^{ k-m+N } {\bf D}_{q,\omega} ^{m} \big(\pi_N {\bf b}_n ^{[k]} \big)\;,\quad n=0,1,\ldots,
$$
and using (\ref{basis}) and (\ref{EqDkN1}), we obtain (\ref{2b}).
\end{proof}

\begin{theorem}\label{theorem-main}{\rm (Case $m\geq k+N$)}
Let $\big((P_n)_{n\geq0},(Q_n)_{n\geq0}\big)$ be a $\pi_N$-$(q, \omega)$-coherent pair with index $M$ and order $(m,k)$,
so that $(\ref{1ba})$ and $(\ref{cond1})$ hold.
Let ${\bf u}$ and ${\bf v}$ be the regular linear functionals with respect to which
$(P_n)_{n\geq0}$ and $(Q_n)_{n\geq0}$ are orthogonal.
Suppose $m \geq k+N$. Assume further that $m>k$ whenever $N=0$.
For each $i=0,\ldots,m-k$ and $n=0,1,\ldots$, let
\begin{equation}\label{EqVarphi1}
\varphi(x;n,i):=\sum_{\substack{j+\ell=i \\ 0\leq j\leq N \\ 0\leq \ell\leq M}}
\begin{bmatrix}
m-k-N \\
j
\end{bmatrix}_{q^{-1}}
    L_{1/q,-\omega/q} ^{j} \big( D_{1/q,-\omega/q} ^{m-k-N-j}\phi(.;n,\ell)\big)(x)\;,
\end{equation}
$\phi(\cdot;n,j)$ being the polynomial introduced in $(\ref{fi})$.
Let $\mathcal{A}(x)$ be the polynomial matrix of order $m-k+1$ defined by
$$
\mathcal{A}(x):=\big[\varphi(x;n,j)\big]_{n,j=0}^{m-k}\;.
$$
Let $\mathcal{A}_1(x)$ (resp., $\mathcal{A}_2(x)$) be the matrix obtained
by replacing the first (resp., the second) column of
$\mathcal{A}(x)$ by $\big[\psi(x;0),\psi(x;1),\ldots,\psi(x;m-k)]^t$,
and set
$$
A(x):=\det\mathcal{A}(x)\;,\quad A_1(x):=\det\mathcal{A}_1(x)\;,\quad A_2(x):=\det\mathcal{A}_2(x)\;.
$$
Assume that the polynomial $A(x)$ does not vanish identically.
Then
\begin{equation}\label{EqAvA1u}
A{\bf v}=A_1{\bf u}\;,\quad A {\bf D}_{1/q,-\omega/q}  {\bf v}  = A_2{\bf u}\;,
\end{equation}
hence ${\bf u}$ and ${\bf v}$ are $D_{q,\omega}$-semiclassical functionals related by a rational transformation.
Moreover, ${\bf u}$ and ${\bf v}$ fulfill the following equations:
\begin{align}
{\bf D}_{1/q,-\omega/q}  (A_1 L_{q,\omega} (A ){\bf u})&=\big( qA_1 D_{q,\omega}(A)  +A_1 D_{1/q,-\omega/q}  (A)   +A_2 L_{1/q,-\omega/q}(A) \big) {\bf u}, \label{FucEquv1}\\[7pt]%
{\bf D}_{1/q,-\omega/q}  (L_{q,\omega} (AA_1 ){\bf v})&=\big( qD_{q,\omega} (AA_1)  +A A_2 \big) {\bf v}  \label{FucEquv2}.
\end{align}
\end{theorem}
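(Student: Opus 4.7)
The overall plan is to chain Lemma \ref{lemma-main} with an outer application of the Hahn Leibniz rule, then solve the resulting linear system by Cramer's rule, and finally convert the two scalar identities obtained into the Pearson-type equations (\ref{FucEquv1})--(\ref{FucEquv2}).

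\emph{Step 1 (Leibniz expansion).} Since $m\geq k+N$, I would expand the right-hand side of (\ref{2a}) by applying Leibniz's formula to each term ${\bf D}_{1/q,-\omega/q}^{m-k-N}\bigl(\phi(\cdot;n,\ell){\bf D}_{1/q,-\omega/q}^{\ell}{\bf v}\bigr)$ and then regroup by the total number $i$ of derivatives $D_{1/q,-\omega/q}$ falling on ${\bf v}$ (so $i=\ell+j$, where $j$ of the outer derivatives are absorbed by $\phi(\cdot;n,\ell)$). The result is the scalar-functional identity
$$
\psi(\cdot;n)\,{\bf u}=\sum_{i=0}^{m-k}\varphi(\cdot;n,i)\,{\bf D}_{1/q,-\omega/q}^{i}{\bf v},\qquad n=0,1,\ldots,
$$
with $\varphi(\cdot;n,i)$ exactly as in (\ref{EqVarphi1}).

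\emph{Step 2 (Cramer's rule).} Taking $n=0,1,\ldots,m-k$, the above is a square linear system whose coefficient matrix is $\mathcal{A}(x)$ and whose ``unknowns'' are the functionals $\{{\bf D}_{1/q,-\omega/q}^{i}{\bf v}\}_{i=0}^{m-k}$. Forming linear combinations of the equations with the algebraic cofactors of the $0$-th (resp.\ $1$-st) column of $\mathcal{A}(x)$, so that only the $0$-th (resp.\ $1$-st) unknown survives, one gets
$$
A\,{\bf v}=A_{1}\,{\bf u},\qquad A\,{\bf D}_{1/q,-\omega/q}{\bf v}=A_{2}\,{\bf u},
$$
which is (\ref{EqAvA1u}). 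The auxiliary hypothesis ``$m>k$ whenever $N=0$'' ensures $m-k\geq 1$, so that a second column of $\mathcal{A}$ exists and the second identity is meaningful. In particular, ${\bf u}$ and ${\bf v}$ are related by a rational transformation.

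\emph{Step 3 (Pearson-type equations).} To reach (\ref{FucEquv2}), I would compute ${\bf D}_{1/q,-\omega/q}\bigl(L_{q,\omega}(AA_{1}){\bf v}\bigr)$ via the product rule; the identities $L_{1/q,-\omega/q}L_{q,\omega}=I$ and $D_{1/q,-\omega/q}L_{q,\omega}=q\,L_{q,\omega}D_{q,\omega}$ reduce the shift factor inside the Leibniz expansion to $AA_{1}$, so that the term involving ${\bf D}_{1/q,-\omega/q}{\bf v}$ becomes $AA_{1}\,{\bf D}_{1/q,-\omega/q}{\bf v}=A_{1}(A\,{\bf D}_{1/q,-\omega/q}{\bf v})=A_{1}A_{2}\,{\bf u}=A_{2}(A\,{\bf v})=AA_{2}\,{\bf v}$, thereby eliminating ${\bf u}$ and producing (\ref{FucEquv2}). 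For (\ref{FucEquv1}), an analogous computation of ${\bf D}_{1/q,-\omega/q}(A_{1}L_{q,\omega}(A){\bf u})$, combined with the expression for $A\,L_{1/q,-\omega/q}(A_{1})\,{\bf D}_{1/q,-\omega/q}{\bf u}$ that follows by differentiating $A{\bf v}=A_{1}{\bf u}$ and substituting both equations of (\ref{EqAvA1u}), yields the claimed identity. The $D_{q,\omega}$-semiclassical character of ${\bf u}$ and ${\bf v}$ then follows immediately from (\ref{FucEquv1}) and (\ref{FucEquv2}) via the equivalence ${\bf D}_{q,\omega}(\phi{\bf w})=\psi{\bf w}\iff {\bf D}_{1/q,-\omega/q}(\widehat{\phi}\,{\bf w})=\psi{\bf w}$ recorded after (\ref{DPearson}).

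The main obstacle I anticipate lies in Step 1: the polynomials $\phi(\cdot;n,\ell)$ themselves come out of a double Leibniz expansion in the proof of Lemma \ref{lemma-main}, so combining them with an outer Leibniz of order $m-k-N$ and reindexing so that the coefficient of each ${\bf D}_{1/q,-\omega/q}^{i}{\bf v}$ assumes precisely the symmetric shape (\ref{EqVarphi1}) requires careful use of $q$-Vandermonde-type identities for the $q^{-1}$-binomial coefficients. Step 2 is essentially formal once Step 1 is in place, and Step 3 reduces to bookkeeping with the two relations of (\ref{EqAvA1u}) and the commutation rules between $L_{q,\omega}$, $L_{1/q,-\omega/q}$, $D_{q,\omega}$, and $D_{1/q,-\omega/q}$.
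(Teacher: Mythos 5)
Your proposal is correct and follows essentially the same route as the paper: an outer Leibniz expansion of (\ref{2a}) regrouped by the total order of the derivative acting on ${\bf v}$, Cramer's rule on the resulting $(m-k+1)\times(m-k+1)$ system to obtain (\ref{EqAvA1u}), and then the product rule together with $D_{1/q,-\omega/q}L_{q,\omega}=qD_{q,\omega}$ and $L_{1/q,-\omega/q}L_{q,\omega}=I$ to derive (\ref{FucEquv1})--(\ref{FucEquv2}). The only difference is that you supply more detail in the last step than the paper does (which simply asserts that the two Pearson-type equations follow from (\ref{EqAvA1u})), and your anticipated need for $q$-Vandermonde identities in Step 1 does not actually arise, since the regrouping is a pure reindexing with $\varphi$ defined as the collected coefficient.
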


\begin{proof}
Combining (\ref{2a}) and Leibniz formula, we get
$$
\psi(\cdot;n){\bf u}
=\sum_{\ell=0}^{N}\sum_{j=0}^{m-k-N} 
\begin{bmatrix}
m-k-N \\
j
\end{bmatrix}_{q^{-1}} 
L_{1/q,-\omega/q} ^{j} \big( D_{1/q,-\omega/q} ^{m-k-N-j} \phi(\cdot;n,\ell)\big)  {\bf D}_{1/q,-\omega/q} ^{j+\ell} {\bf v}.
$$
This may be rewritten as
\begin{equation}\label{varphi-uv}
\psi(\cdot;n){\bf u}=\sum_{\ell=0}^{m-k}\varphi(\cdot;n,\ell){\bf D}_{1/q,-\omega/q} ^{\ell} {\bf v}
,\quad n=0,1,\dots,
\end{equation}
where $\varphi(\cdot;n,i)$ is the polynomial introduced in (\ref{EqVarphi1}).
Taking $n=0,1,\ldots,m-k$ in (\ref{varphi-uv}) we obtain a system with $m-k+1$ equations that can be written as
$$
\left(
\begin{array}{c}
\psi(x;0){\bf u} \\
\psi(x;1){\bf u} \\
\vdots \\
\psi(x;m-k){\bf u}
\end{array}
\right)
=\mathcal{A}(x)
\left(
\begin{array}{c}
{\bf v} \\
{\bf D}_{1/q,-\omega/q}{\bf v} \\
\vdots \\
{\bf D}_{1/q,-\omega/q} ^{ m-k}{\bf v}
\end{array}
\right).
$$
Solving for ${\bf v}$ and ${\bf D}_{1/q,-\omega/q} {\bf v}$ we obtain (\ref{EqAvA1u}).
Finally, one can remark that $D_{1/q,-\omega/q} L_{q,\omega} = qD_{q,\omega}$. Hence (\ref{FucEquv1}) and (\ref{FucEquv2}) follow from (\ref{EqAvA1u}).  
\end{proof}

\begin{remark}
As it is commented in \cite{CD20}, if $m=k$ and $N=0$, then ${\bf u}$ and ${\bf v}$ are still related by a rational transformation, but we cannot ensure that they are  $D_{q,\omega}$-semiclassical.
\end{remark}

\begin{theorem}\label{theorem-mainB}{\rm (Case $m<k+N$)}
Let $\big((P_n)_{n\geq0},(Q_n)_{n\geq0}\big)$ be a $\pi_N$-$(q,\omega)$-coherent pair with index $M$ and order $(m,k)$,
so that $(\ref{1ba})$ and $(\ref{cond1})$ hold.
Let ${\bf u}$ and ${\bf v}$ be the regular linear functionals with respect to which
$(P_n)_{n\geq0}$ and $(Q_n)_{n\geq0}$ are orthogonal.
Assume further that $m < k+N$.
For each $j=0,\ldots,k-m+N$ and $n=0,1,\ldots$, set
\begin{equation}\label{Eqxi1}
\xi(x;n,j):=\begin{bmatrix}
k+N-m\\
j
\end{bmatrix}_{q^{-1}} L_{1/q,-\omega/q} ^{j} \big(D_{1/q,-\omega/q} ^{k+N-m-j}  \psi \big)(x;n) \;,
\end{equation}
$\psi(\cdot;n)$ being the polynomial introduced in $(\ref{si})$.
Let $\mathcal{B}(x):= \big[b_{i,j}(x)\big]_{i,j=0}^{k-m+2N}$ be the polynomial matrix of order $k-m+2N+1$ defined by
$$
b_{i,j}(x):=\left\{
\begin{array}{ccl}
\phi(x;i,j) & \mbox{\rm if} & 0\leq j\leq N\;, \\ [0.5em]
-\xi(x;i,j-N) & \mbox{\rm if} & N+1\leq j\leq k-m+2N\;,
\end{array}
\right.
$$
$\phi(\cdot;i,j)$ being the polynomial given by $(\ref{fi})$.
Let $\mathcal{B}_1(x)$ (resp., $\mathcal{B}_2(x)$ and $\mathcal{B}_{N+2}(x)$) be the matrix obtained
by replacing the first (resp., the second and the $(N+2)$-th) column of
$\mathcal{B}(x)$ by $\big[\xi(x;0,0),\xi(x;1,0),\ldots,\xi(x;m-k+2N,0)]^t$,
and set
$$
B(x):=\det\mathcal{B}(x)\;,\quad B_j(x):=\det\mathcal{B}_j(x)\;,\quad j\in\{1,2,N+2\}\;.
$$
Assume that the polynomial $B(x)$ does not vanish identically.
Then
\begin{equation}\label{EqAvB1u}
B{\bf v}=B_1{\bf u}\;,\quad B{\bf D}_{1/q,-\omega/q}  {\bf v}=B_2{\bf u}\;,\quad B{\bf D}_{1/q,-\omega/q}  {\bf u}=B_{N+2}{\bf u}\;,
\end{equation}
hence ${\bf u}$ and ${\bf v}$ are  $D_{q,\omega}$-semiclassical functionals related by a rational transformation.
Moreover, ${\bf u}$ and ${\bf v}$ fulfill the following equations:
\begin{align}\label{FucEquvB}
&{\bf D}_{1/q,-\omega/q}  ( L_{q,\omega} (BB_1){\bf v})=\big(q D_{q,\omega}(BB_1) +BB_2   \big){\bf v} \\
&{\bf D}_{1/q,-\omega/q}  (L_{q,\omega}B {\bf u})=\big(q D_{q,\omega}B +B_{N+2} \big) {\bf u}.
\end{align}
\end{theorem}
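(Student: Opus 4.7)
The plan is to mimic the proof of Theorem \ref{theorem-main}, but to start from (\ref{2b}) instead of (\ref{2a}). The new feature is that the operator ${\bf D}_{1/q,-\omega/q}^{k+N-m}$ now falls on the product $\psi(\cdot;n){\bf u}$ on the left-hand side, so after Leibniz's formula the derivatives of $\mathbf{u}$ appear alongside those of $\mathbf{v}$, and the resulting linear system involves both sets of unknowns.

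Concretely, I would first use the Leibniz formula stated in the preliminaries to rewrite
\[
{\bf D}_{1/q,-\omega/q}^{k+N-m}\bigl(\psi(\cdot;n){\bf u}\bigr)=\sum_{j=0}^{k+N-m}\xi(\cdot;n,j)\,{\bf D}_{1/q,-\omega/q}^{j}{\bf u},
\]
with $\xi(\cdot;n,j)$ as in (\ref{Eqxi1}). Substituting into (\ref{2b}) and isolating the $j=0$ summand gives, for every $n\geq0$,
\[
\xi(\cdot;n,0)\,{\bf u}=\sum_{j=0}^{N}\phi(\cdot;n,j)\,{\bf D}_{1/q,-\omega/q}^{j}{\bf v}-\sum_{j=1}^{k+N-m}\xi(\cdot;n,j)\,{\bf D}_{1/q,-\omega/q}^{j}{\bf u},
\]
which is exactly the $n$-th row of the linear system defined by $\mathcal{B}(x)$, acting on the column vector whose entries are ${\bf v},{\bf D}_{1/q,-\omega/q}{\bf v},\ldots,{\bf D}_{1/q,-\omega/q}^{N}{\bf v}$ followed by ${\bf D}_{1/q,-\omega/q}{\bf u},\ldots,{\bf D}_{1/q,-\omega/q}^{k-m+N}{\bf u}$.

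Letting $n$ range over $0,1,\ldots,k-m+2N$ then yields a square system of $k-m+2N+1$ equations in the same number of unknowns, with coefficient matrix $\mathcal{B}(x)$ and right-hand side $\bigl(\xi(x;i,0)\bigr)_{i}\mathbf{u}$. Since $B\not\equiv0$ by hypothesis, Cramer's rule applied to the $1$st, $2$nd and $(N+2)$-th unknowns produces the three identities in (\ref{EqAvB1u}); the first of them, $B\mathbf{v}=B_1\mathbf{u}$, already shows that $\mathbf{u}$ and $\mathbf{v}$ are related by a rational transformation. The two Pearson-type equations in (\ref{FucEquvB}) are then obtained by applying the product rule ${\bf D}_{1/q,-\omega/q}(f\mathbf{w})=D_{1/q,-\omega/q}f\cdot\mathbf{w}+L_{1/q,-\omega/q}f\cdot{\bf D}_{1/q,-\omega/q}\mathbf{w}$ to $L_{q,\omega}B\cdot\mathbf{u}$ and to $L_{q,\omega}(BB_1)\cdot\mathbf{v}$, exactly as in the final step of Theorem \ref{theorem-main}, together with $L_{1/q,-\omega/q}L_{q,\omega}=I$ and $D_{1/q,-\omega/q}L_{q,\omega}=qD_{q,\omega}$. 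For the first, the remaining coefficient of ${\bf D}_{1/q,-\omega/q}\mathbf{u}$ is $B$, which is eliminated by $B{\bf D}_{1/q,-\omega/q}\mathbf{u}=B_{N+2}\mathbf{u}$. For the second, the coefficient $BB_1$ of ${\bf D}_{1/q,-\omega/q}\mathbf{v}$ is rewritten as $B_1\cdot B\,{\bf D}_{1/q,-\omega/q}\mathbf{v}=B_1B_2\mathbf{u}=B_2\cdot B_1\mathbf{u}=B_2B\mathbf{v}$, where the last step uses $B_1\mathbf{u}=B\mathbf{v}$.

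The main obstacle is purely combinatorial bookkeeping: one must check that the column replacements prescribed by the theorem really do correspond to solving for $\mathbf{v}$, ${\bf D}_{1/q,-\omega/q}\mathbf{v}$ and ${\bf D}_{1/q,-\omega/q}\mathbf{u}$ under the chosen ordering of the unknowns (so that the columns at positions $0$, $1$ and $N+1$, i.e.\ the $1$st, $2$nd and $(N+2)$-th, are indeed the relevant ones), and that the degrees of $\psi(\cdot;n)$ and $\phi(\cdot;n,j)$ recorded in Lemma \ref{lemma-main} are consistent with the assumption $B\not\equiv0$ in the generic case. Once this indexing is settled, the argument is a formal $(q,\omega)$-translation of that of Theorem \ref{theorem-main}.
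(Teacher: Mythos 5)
Your proposal is correct and follows essentially the same route as the paper: expand ${\bf D}_{1/q,-\omega/q}^{k+N-m}\big(\psi(\cdot;n){\bf u}\big)$ by Leibniz's formula in \eqref{2b}, assemble the square system with coefficient matrix $\mathcal{B}(x)$ for $n=0,\ldots,k-m+2N$, and solve by Cramer's rule for ${\bf v}$, ${\bf D}_{1/q,-\omega/q}{\bf v}$ and ${\bf D}_{1/q,-\omega/q}{\bf u}$. You in fact supply more detail than the paper does for the final Pearson-type equations (and correctly start from \eqref{2b}, whereas the paper's proof misquotes \eqref{2a}), and your substitutions $B{\bf D}_{1/q,-\omega/q}{\bf u}=B_{N+2}{\bf u}$ and $BB_1{\bf D}_{1/q,-\omega/q}{\bf v}=B_1B_2{\bf u}=B_2B{\bf v}$ are exactly what is needed.
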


\begin{proof}
Using Leibniz formula we can rewrite (\ref{2a}) as
$$
\sum_{j=0}^{k-m+N}\xi(\cdot;n,j){\bf D}_{1/q,-\omega/q} ^{j}{\bf u}
=\sum_{j=0}^{N}\phi(\cdot;n,j){\bf D}_{1/q,-\omega/q} ^{j}{\bf v},\quad n=0,1,\ldots.
$$
Taking $n=0,1,\ldots,k-m+2N$, we obtain the following system of $k-m+2N+1$ equations:
$$
\left(
\begin{array}{c}
\xi(x;0,0){\bf u} \\
\xi(x;1,0){\bf u} \\
\vdots \\
\xi(x;k-m+N,0){\bf u} \\
\xi(x;k-m+N+1,0){\bf u} \\
\vdots \\
\xi(x;k-m+2N,0){\bf u}
\end{array}
\right)
=\mathcal{B}(x)
\left(
\begin{array}{c}
{\bf v} \\
{\bf D}_{1/q,-\omega/q}  {\bf v} \\
\vdots \\ [0.25em]
{\bf D}_{1/q,-\omega/q}^{N} {\bf v} \\
{\bf D}_{1/q,-\omega/q} {\bf u} \\
\vdots \\ [0.25em]
{\bf D}_{1/q,-\omega/q} ^{k-m+N}{\bf u}
\end{array}
\right)\;.
$$
The theorem follows by solving this system for ${\bf v}$, ${\bf D}_{1/q,-\omega/q}  {\bf v}$, and ${\bf D}_{1/q,-\omega/q} {\bf u}$.
\end{proof}

\begin{theorem}\label{theorem-main-kzero}{\rm (Case $k=0$)}
Let $\big((P_n)_{n\geq0},(Q_n)_{n\geq0}\big)$ be a $\pi_N$- $(q,\omega)$-coherent pair with index $M$ and order $(m,0)$,
so that the structure relation
$$
\pi_N(x)P_n^ {[m]}(x) = \sum_{j=n-M}^{n+N} c_{n,j} Q_j(x),\quad n=0,1,\ldots
$$
holds, where $M$ and $N$ are fixed non-negative integer numbers, $\pi_N $ is a monic polynomial of degree $N$,
and $c_{n,n-M} \neq 0$ if $n\geq M$.
Assume further that $m\geq1$ if $N=0$.
Let ${\bf u}$ and ${\bf v}$ be the regular linear functionals with respect to which
$(P_n)_{n\geq0}$ and $(Q_n)_{n\geq0}$ are orthogonal.
Then ${\bf u}$ and ${\bf v}$ are  $D_{q,\omega}$-semiclassical functionals related by a rational transformation. More precisely, setting
\begin{equation}\label{EqPhikzero1}
\Phi(x;j):=\frac{\langle{\bf v},Q_j^2\rangle\psi(x;j)- \displaystyle \sum_{ l=0 } ^{j-1} \begin{bmatrix}
m\\
l
\end{bmatrix} _{q^{-1}}
L_{1/q,-\omega/q} ^{m-l}\big(D_{1/q,-\omega/q} ^{l}  Q_j \big)(x) \Phi(x;l)}{[j]_{q^{-1}}!  \begin{bmatrix}
m \\
j
\end{bmatrix} _{q^{-1}} },
\end{equation}
$j=0,1,\ldots$, $\psi(\cdot;j)$ being the polynomial introduced in $(\ref{si})$,
then $\deg\Phi(\cdot;0)=M+m$, $\deg\Phi(\cdot;j)\leq M+m+j$ for each $j=1,\ldots,m$,
and the following holds:
\begin{align}
{\bf D}_{1/q,-\omega/q}  \big(\Phi(\cdot;1){\bf u}\big)&=\Phi(\cdot;0){\bf u}, \label{EqFuc1}\\[7pt]
\pi_N{\bf v}&=\Phi(\cdot;m){\bf u}, \label{EqFuc2}\\[7pt]
{\bf D}_{1/q,-\omega/q}  \big(L_{q,\omega}\Phi(\cdot;m)) \pi_N{\bf v}\big)&= \left( qD_{q,\omega} \Phi(\cdot;m)+\Phi(\cdot;m-1)\right) \pi_N{\bf v}. \label{EqFuc3}
\end{align}
Moreover, $\mathfrak{s}({\bf u})\leq M+m-1$ and $\;\mathfrak{s}({\bf v})\leq N+M+2(m-1)$.
\end{theorem}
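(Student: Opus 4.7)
The plan is to apply Lemma \ref{lemma-main} with $k=0$ and, since $(Q_n)_{n\geq 0}$ is an OPS with respect to ${\bf v}$ (so its dual basis is ${\bf b}_n = Q_n{\bf v}/\langle{\bf v},Q_n^2\rangle$), to rewrite the identity (\ref{basis}) as
\[
{\bf D}_{1/q,-\omega/q}^{m}(Q_n\pi_N{\bf v}) = \langle{\bf v},Q_n^2\rangle\,\psi(\cdot;n)\,{\bf u}, \qquad n=0,1,\ldots .
\]
First I would expand the left-hand side with Leibniz's formula. Writing $G_l := {\bf D}_{1/q,-\omega/q}^{m-l}(\pi_N{\bf v})$ and using that $D_{1/q,-\omega/q}^{l}Q_n=0$ for $l>n$, this gives, for each $n=0,1,\ldots,m$,
\[
\langle{\bf v},Q_n^2\rangle\,\psi(\cdot;n)\,{\bf u} = \sum_{l=0}^{n} \begin{bmatrix}m\\l\end{bmatrix}_{q^{-1}} L_{1/q,-\omega/q}^{m-l}\!\bigl(D_{1/q,-\omega/q}^{l}Q_n\bigr)\,G_l,
\]
a triangular linear system in $G_0,\ldots,G_m$ whose diagonal entries $\begin{bmatrix}m\\j\end{bmatrix}_{q^{-1}}[j]_{q^{-1}}!$ are nonzero, because $Q_j$ is monic and so $D_{1/q,-\omega/q}^{j}Q_j=[j]_{q^{-1}}!$. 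Solving the system recursively on $j=0,1,\ldots,m$ reproduces exactly the recursion (\ref{EqPhikzero1}) and yields the central identity
\[
{\bf D}_{1/q,-\omega/q}^{m-j}(\pi_N{\bf v}) = \Phi(\cdot;j)\,{\bf u}, \qquad j=0,1,\ldots,m.
\]

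From this, (\ref{EqFuc2}) is the case $j=m$ and (\ref{EqFuc1}) follows by applying ${\bf D}_{1/q,-\omega/q}$ to the $j=1$ case and comparing with the $j=0$ case. For (\ref{EqFuc3}), I would expand ${\bf D}_{1/q,-\omega/q}\bigl(L_{q,\omega}\Phi(\cdot;m)\cdot\pi_N{\bf v}\bigr)$ by the rule ${\bf D}_{1/q,-\omega/q}(f{\bf u}) = D_{1/q,-\omega/q}f\,{\bf u} + L_{1/q,-\omega/q}f\,{\bf D}_{1/q,-\omega/q}{\bf u}$, invoke $D_{1/q,-\omega/q}L_{q,\omega}=qD_{q,\omega}$ and $L_{1/q,-\omega/q}L_{q,\omega}=I$, and then substitute $\pi_N{\bf v}=\Phi(\cdot;m){\bf u}$ together with the $j=m-1$ case ${\bf D}_{1/q,-\omega/q}(\pi_N{\bf v})=\Phi(\cdot;m-1){\bf u}$; the second term rearranges as $\Phi(\cdot;m)\Phi(\cdot;m-1){\bf u} = \Phi(\cdot;m-1)\,\pi_N{\bf v}$, which collects everything into the claimed form. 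The degree estimates $\deg\Phi(\cdot;0)=M+m$ and $\deg\Phi(\cdot;j)\leq M+m+j$ for $j=1,\ldots,m$ follow by induction on $j$ from (\ref{EqPhikzero1}), using $\deg\psi(\cdot;j)=M+m+j$ (from Lemma \ref{lemma-main}) and $\deg L_{1/q,-\omega/q}^{m-l}\bigl(D_{1/q,-\omega/q}^{l}Q_j\bigr)=j-l$.

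Finally, the class estimates are extracted from the Pearson-type equations: (\ref{EqFuc1}) exhibits ${\bf u}$ as $D_{1/q,-\omega/q}$-semiclassical with polynomial coefficients of degree at most $M+m+1$ and $M+m$, giving $\mathfrak{s}({\bf u})\leq M+m-1$ (the class being invariant between the $D_{q,\omega}$- and $D_{1/q,-\omega/q}$-settings, as recalled after (\ref{DPearson})); similarly, (\ref{EqFuc3}) presents ${\bf v}$ as $D_{1/q,-\omega/q}$-semiclassical with coefficients of degree at most $N+M+2m$ and $N+M+2m-1$, hence $\mathfrak{s}({\bf v})\leq N+M+2(m-1)$. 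I expect the principal care to be required in setting up the recursive solution of the triangular system (so that it matches (\ref{EqPhikzero1}) term by term) and in the bookkeeping behind (\ref{EqFuc3}), where the cancellation rests on the simple but easy-to-misapply identities relating $L_{q,\omega}$, $L_{1/q,-\omega/q}$, $D_{q,\omega}$, and $D_{1/q,-\omega/q}$; no individual step is conceptually deep.
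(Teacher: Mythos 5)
Your proposal is correct and follows essentially the same route as the paper: it reduces to the identity ${\bf D}_{1/q,-\omega/q}^{m}(Q_n\pi_N{\bf v})=\langle{\bf v},Q_n^2\rangle\psi(\cdot;n){\bf u}$, obtains ${\bf D}_{1/q,-\omega/q}^{m-j}(\pi_N{\bf v})=\Phi(\cdot;j){\bf u}$ by the recursive Leibniz expansion for $n=0,1,\ldots,m$ (which the paper carries out step by step rather than phrasing as a triangular system), and then derives \eqref{EqFuc1}--\eqref{EqFuc3} and the degree and class bounds exactly as in the paper's argument.
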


\begin{proof}
Since $k=0$, then ${\bf b}_n^{[k]}\equiv{\bf b}_n^{[0]}={\bf b}_n=\langle {\bf v},Q_n^2\rangle^{-1}Q_n{\bf v}$ for each $n=0,1,\ldots$,
hence relation (\ref{basis}) may be rewritten as
\begin{equation}\label{basis-kzero1}
{\bf D}_{1/q,-\omega/q} ^{m} \big(Q_n \pi_N {\bf v}\big) = \langle {\bf v},Q_n^2\rangle\psi(\cdot;n)\textbf{u}, \quad n=0,1,\ldots,
\end{equation}
where $\psi(\cdot;n)$ is defined by (\ref{si}).
Taking $n=0$, we obtain
\begin{equation}\label{basis-kzero2}
{\bf D}_{1/q,-\omega/q} ^{m} \big(\pi_N {\bf v}\big) = \Phi(\cdot;0)\textbf{u}.
\end{equation}
Taking $n=1$ in (\ref{basis-kzero1}) and then applying Leibniz's formula, we deduce
\begin{align*}
\langle {\bf v},Q_1^2\rangle\psi(\cdot;1)\textbf{u}&=
{\bf D}_{1/q,-\omega/q} ^{m} \big(Q_1 \pi_N {\bf v}\big)\\
&= [m]_{q^{-1}}{\bf D}_{1/q,-\omega/q} ^{m-1} \big(\pi_N {\bf v}\big)+ L_{1/q,-\omega/q} ^{m} Q_1 {\bf D}_{1/q,-\omega/q} ^{m} \big(\pi_N {\bf v}\big).
\end{align*}
Hence, by (\ref{basis-kzero2}), we have
\begin{equation}\label{basis-kzero3}
{\bf D}_{1/q,-\omega/q} ^{m-1} \big(\pi_N {\bf v}\big) = \Phi(\cdot;1)\textbf{u}.
\end{equation}
Thus (\ref{EqFuc1}) follows from (\ref{basis-kzero2}) and (\ref{basis-kzero3}).
This proves that ${\bf u}$ is  $D_{q,\omega}$-semiclassical of class $\mathfrak{s}({\bf u})\leq M+m-1$.
We conclude pursuing with the described procedure,
so that by taking successively $n=0,1,\ldots,m$ in (\ref{basis-kzero1}), the following relation holds:
\begin{equation}\label{basis-kzero4}
{\bf D}_{1/q,-\omega/q} ^{m-j} \big(\pi_N {\bf v}\big) = \Phi(\cdot;j)\textbf{u},\quad j=0,1,\ldots,m.
\end{equation}
In particular, for $j=m$ we obtain (\ref{EqFuc2}), hence ${\bf u}$ and ${\bf v}$
are related by a rational transformation. Setting $j=m-1$ in (\ref{basis-kzero4}),
we obtain
\begin{equation}\label{basis-kzero5}
{\bf D}_{1/q,-\omega/q} \big(\pi_N {\bf v}\big) = \Phi(\cdot;m-1)\textbf{u}.
\end{equation}
Since
${\bf D}_{1/q,-\omega/q} \big( L_{q,\omega}(\Phi(\cdot;m))\pi_N{\bf v}\big)=qD_{q,\omega} (\Phi(\cdot;m))\pi_N{\bf v}+ \Phi(\cdot;m) {\bf D}_{1/q,-\omega/q} \big(\pi_N{\bf v}\big)$,
we obtain (\ref{EqFuc3}) using (\ref{basis-kzero5}) and (\ref{EqFuc2}).
Thus ${\bf v}$ is  $D_{q,\omega}$-semiclassical of class $\mathfrak{s}({\bf v})\leq N+M+2m-2$,
and the theorem is proved.
\end{proof}

\section{An Application}\label{Section-example}

The interest of the results presented in the previous section
will be illustrated by an exhaustive analysis of the $\pi_N$-$(q,w)$-coherent pairs of index $M=0$ and order $(m,k)=(1,0)$, considering $N\leq2$ and $P_n=Q_n$ for each $n=0,1,\ldots$.
This means that we focus on the structure relation
\begin{equation}\label{Griffin1}
\pi_N(x)  D_{q,\omega}P_{n+1}(x)=[n+1]_q \sum_{j=n} ^{n+N} c_{n,j} P_{j} (x), \quad n=0,1,\ldots\;,
\end{equation}
where $\pi_N$ is a monic polynomial of degree $N\in\{0,1,2\}$
and the $c_{n,j}$ are complex numbers subject to the conditions $c_{n,n+N}=1$ and $c_{n,n}\neq0$ for each $n=0,1,2,\ldots$.
Our aim is to describe all the monic OPS $(P_n)_{n\geq0}$ fulfilling \eqref{Griffin1}. 
We prove in a rather simple and constructive way that, up to affine transformations of variable, the only monic OPS satisfying \eqref{Griffin1} are the monic $q$-classical OPS given in Table \ref{table1}.
This is a $(q,\omega)$-analogue of the well known characterization of classical OPS (Hermite, Laguerre, Jacobi, and Bessel) due to Al-Salam and Chihara \cite{ASC1972}.

\begin{table}[htp]
\caption{Monic $q$-classical OPS}
\begin{center}
\begin{tabular}{|c|c|c|c|}
\hline
Name &  Notation ($P_n$) & Restrictions & Reference \\
\hline\hline
Al-Salam-Carlitz & $U_n^{(a)}(\cdot|q)$ & $a\neq0$ & \cite[(14.24.4)]{KSL2010} \\
\hline
Big-$q$-Laguerre & $L_n(\cdot;a,b|q)$ & $ab\neq0$\;;\; $a,b\not\in\Lambda$ & \cite[(14.11.4)]{KSL2010} \\
\hline
Little-$q$-Laguerre & $L_n(\cdot;a|q)$ & $a\neq0$\;;\; $a\not\in\Lambda$ & \cite[(14.20.4)]{KSL2010} \\
\hline
------ & $l_n(\cdot;a|q)$ & $a\neq0$ & \cite[Table 2]{MA-N2001} \\
\hline
Big-$q$-Jacobi & $P_n(\cdot;a,b,c|q)$ & $ac\neq0$\;;\; $a, b,c,ab,abc^{-1}\not\in\Lambda$  & \cite[(14.5.4)]{KSL2010} \\
\hline
Little-$q$-Jacobi & $P_n(\cdot;a,b|q)$ & $a\neq0$\;;\; $a,b,ab\not\in\Lambda$ & \cite[(14.12.4)]{KSL2010} \\
\hline
$q$-Bessel & $B_n(\cdot;a|q)$  & $a\neq0$\;;\; $-a\not\in\Lambda$ & \cite[(14.22.4)]{KSL2010} \\
\hline
------ & $j_n(\cdot;a,b|q)$  & $ab\neq0$\;;\; $a\not\in\Lambda$ & \cite[Table 2]{MA-N2001} \\
\hline
\end{tabular}
\end{center}
\label{table1}
\end{table}

\noindent
The set $\Lambda$ in Table 1 is defined by $\Lambda:=\{q^{-n}\,:\,n=1,2,\ldots\}$.
The three-term recurrence relation characterizing the monic OPS $(P_n)_{n\geq0}$ will be written as
\begin{equation}\label{TTRRPn}
xP_n(x)=P_{n+1}(x)+\beta_n P_n(x)+\gamma_nP_{n-1}(x),\quad n=0,1,\ldots \;,
\end{equation}
$(\beta_n)_{n\geq0}$ and $(\gamma_n)_{n\geq1}$ being
sequences of complex numbers such that $\gamma_n \neq 0$ for each $n=1,2,\ldots$, and
$P_{-1}(x)=0$. It is well known that 
\begin{equation}\label{TTRRPnCo}
\beta_n=\frac{\langle{\bf u},xP_n^2\rangle}{\langle{\bf u},P_n^2\rangle}\;,\quad
\gamma_{n+1}=\frac{\langle{\bf u},P_{n+1}^2\rangle}{\langle{\bf u},P_n^2\rangle}
,\quad n=0,1,\ldots \,,
\end{equation}
where ${\bf u}$ is the regular linear functional with respect to which $(P_n)_{n\geq0}$ is a monic OPS.
We will show that the possible families $(P_n)_{n\geq0}$ fulfilling \eqref{Griffin1} may be related 
(up to affine changes of the variable) to one of the following two OPS:
\medskip

{\bf (I)} The monic OPS $(L_n (x;a,b,c|q))_{n\geq 0}$ given by \eqref{TTRRPn}, where  
\begin{align*}
\beta_n &= \left( a+b-c(q^{n+1} +q^n -1)\right)q^n \;,\\
\gamma_{n+1}  &=- \left( a-cq^{n+1} \right)(b-cq^{n+1})(1-q^{n+1})q^n
\end{align*}
for each $n=0,1,2,\ldots$, and $a,b,c \in \mathbb{C}$ are parameters subject to the regularity conditions
$$ a\neq cq^n\;,\quad b \neq cq^n $$ 
for each $n=1,2,\ldots$.
Although there are three parameters in the definition of $L_n (x;a,b,c|q)$, we note that, without loss of generality, if $c\neq0$ then,  
up to an affine change of variables, we may reduce to the case $c=1$. Indeed, the relation
$$L_n (x;a,b,c|q)= c^{n}\,L_n(x/c;a/c,b/c,1|q)$$
holds for each $n=0,1,2,\ldots$. Moreover, if $c=0$ (and so $ab\neq0$, by the regularity conditions), then 
up to the affine change of variable $x\mapsto bx$, we may reduce to the case $b=1$, taking into account that the relation
$$L_n (x;a,b,0|q)=b^{n}\,L_n(x/b;a/b,1,0|q)$$
holds for each $n=0,1,2,\ldots$.
\medskip

{\bf (II)} The monic OPS $(J_n (x;a,b,c,d|q))_{n\geq 0}$ given by (\ref{TTRRPn}), where
\begin{align*}
\beta_n &= q^n\,\frac{[a(b+d)+c(b+1)](1+dq^{2n+1})-[c(b+d)+ad(b+1)](1+q)q^n}{(1-dq^{2n})(1-dq^{2n+2})}  \,,\\ 
 \gamma_{n+1} &=-\frac{q^n(1-q^{n+1})(1-bq^{n+1})(1-dq^{n+1})(a-cq^{n+1})(b-dq^{n+1})(c-adq^{n+1})}{(1-dq^{2n+1})(1-dq^{2n+2})^2 (1-dq^{2n+3})}
\end{align*}
for each $n=0,1,2,\ldots$, where $a,b,c,d \in \mathbb{C}$ fulfill the regularity conditions
$$ b \neq q^{-n}\;,\quad d\neq q^{-n}\;,\quad a\neq cq^{n}\;,\quad b\neq dq^{n}\;,\quad c\neq adq^n$$
for each $n=1,2,\ldots$.

\begin{remark}
Note that the monic OPS given in {\bf (I)} is a special or limiting case of the monic OPS given in {\bf (II)}. 
Indeed, the following relations hold:
\begin{align}
L_n\big(x;a,b,c| q\big)&=
\left\{
\begin{array}{rcl}
J_n(x;ab/c,c/b,b,0|q) & \mbox{\rm if} & bc\neq0\,, \\ [0.25em]
J_n(x;ab/c,c/a,a,0|q) & \mbox{\rm if} & ac\neq0\;,
\end{array}
\right.  \nonumber \\
L_n(x;0,0,c|q) &=\lim_{b\to0} J_n(x;0,c/b,b,0|q)\quad
\mbox{\rm if}\quad  c\neq0 \;, \nonumber \\  
L_n(x;a,1,0|q) &=\lim_{b\to0} J_n(x;a/b,b,1,0|q)\;. \nonumber
\end{align}
\end{remark}

\begin{remark}\label{remarkLJ2}
The monic $q$-classical OPS (see Table \ref{table1}), up to affine transformations of the variable,
can be obtained from the OPS given in {\bf (I)} and {\bf (II)}.  
Indeed,
\begin{align}
U_n^{(a)}(x) & =L_n(x;a,1,0|q)  \nonumber \\
L_n(x;a,b|q) & =(abq)^n L_n\big(x/(abq);1/a,1/b,1|q\big)   \nonumber \\
L_n(x;a|q) & = L_n\big(x;0,1,a|q\big)  \nonumber  \\
l_n(x;a|q) & = L_n\big(x;0,0,-a|q\big)  \nonumber  \\
P_n(x;a,b,c|q) & =\left\{
\begin{array}{lcl}
q^{n}J_n(x/q;1,a,c,ab|q)\;, & \mbox{\rm if} & b\neq0 \\ [0.25em]
(acq)^{n}L_n(x/(acq);1/a,1/c,1|q)\;, & \mbox{\rm if} &  b=0
\end{array}
\right.  \nonumber \\
P_n(x;a,b|q) & =\left\{
\begin{array}{lcl}
J_n(x;0,a,1,ab|q)\;, & \mbox{\rm if} & b\neq0 \\ [0.25em]
a^nL_n(x/a;1/a,0,1|q)\;, & \mbox{\rm if} &  b=0
\end{array}
\right.   \nonumber \\
B_n(x;a|q) & =J_n(x;0,0,1,-a/q|q\big)  \nonumber \\
j_n(x;a,b|q) & = q^nJ_n\big(x/q; b,0,0, a/q|q\big)\,, \nonumber
\end{align}
where in each case the parameters are subject to the restrictions given in Table \ref{table1}.
\end{remark}

\begin{remark}\label{remarkLJ}
The converse of Remark \ref{remarkLJ2} is also true, that is, the monic OPS in {\bf (I)} and {\bf (II)}
can be obtained from the monic $q$-classical OPS. Indeed,
\smallskip

{\rm (i)} 
If $c=0$ in the definition of $L_n(\cdot;a,b,c|q)$ (and so $ab\neq0$, by the regularity conditions), we obtain (monic) Al-Salam-Carlitz polynomials:
 $$
L_n(x;a,b,0|q)=b^nU_n^{(a/b)}\left(x/b|q\right)\,.
$$
Consider now $c\neq0$. If $ab\neq0$, we obtain Big $q$-Laguerre polynomials: 
$$
L_n(x;a,b,c|q)=\left(ab/(cq)\right)^nL_n\left(cqx/(ab);c/a,c/b\big| q\right) \,;
$$
if $ab=0$ and $|a|+|b|\neq0$, we obtain Little $q$-Laguerre polynomials: 
$$
 L_n\big(x;a,b,c| q\big)=
\left\{
\begin{array}{rcl}
b^{n}L_n(x/b;c/b|q) & \mbox{\rm if} & \mbox{\rm $a=0$ and $b\neq0$} \,,\\ [0.25em]
a^{n}L_n(x/a;c/a|q) & \mbox{\rm if} &  \mbox{\rm $a\neq0$ and $b=0$}\,;
\end{array}
\right.
$$
and if $a=b=0$, we obtain one of the monic OPS given by Medem and \'Alvarez-Nodarse in \cite[Table 2]{MA-N2001}:
$$L_n (x;0,0,c|q)=l_n(x;-c|q)\,.$$

{\rm (ii)}
If $d=0$ in the definition of $J_n(\cdot;a,b,c,d|q)$, the regularity conditions imply $bc\neq0$, and we obtain
Little $q$-Laguerre polynomials if $a=0$ and Big $q$-Laguerre polynomials if $a\neq0$, according to {\rm (i)} and the relation 
$$ J_n(x;a,b,c,0|q)=L_n\big(x;ab,c,bc| q\big)\;.$$ 
Consider now $d\neq0$. If $abc \neq 0$, we obtain Big $q$-Jacobi polynomials:
$$
J_n(x;a,b,c,d|q)=(a/q)^nP_n(qx/a;b,d/b,c/a|q)\;;
$$
if only one among $a$, $b$, and $c$ is zero, then we obtain Little $q$-Jacobi polynomials:
$$
 J_n\big(x;a,b,c,d| q\big)=
\left\{
\begin{array}{lcl}
c^nP_n(x/c;b,d/b|q) & \mbox{\rm if} & \mbox{\rm $a=0$ and $bc\neq0$} \,,\\ [0.25em]
c^nP_n(x/c;ad/c,c/a|q) & \mbox{\rm if} &  \mbox{\rm $b=0$ and $ac\neq0$}\,,\\ [0.25em]
(ab)^nP_n(x/(ab);d/b,b|q) & \mbox{\rm if} &  \mbox{\rm $c=0$ and $ab\neq0$}\,;
\end{array}
\right.
$$
if $a=b=0$ (and so $c\neq0$, by regularity), we obtain $q$-Bessel polynomials:
$$
 J_n\big(x;0,0,c,d| q\big)=c^nB_n(x/c;-dq|q)\;;
 $$
and if $b=c=0$ (and so $a\neq0$, by regularity) we obtain the other monic OPS given by Medem and \'Alvarez-Nodarse in \cite[Table 2]{MA-N2001}:
$$
J_n (x;a,0,0,d|q)=q^{-n}j_n(qx;qd,a|q)\;.
$$
(There are no additional cases, since the condition $d\neq0$ together with the regularity conditions for $(J_n (\cdot;a,b,c,d|q))_{n\geq 0}$ imply $(a,c)\neq(0,0)$.)
\end{remark}

\begin{remark}\label{final}
Note that the $q$-classical OPS are (up to affine changes of the variable) special or limiting cases of the polynomials $J_n$. 
\end{remark}

To achieve the required characterization of \eqref{Griffin1}, we also need to use the following fact \cite[Theorem 1.2]{Nossa}:
if $\textbf{u}\in\mathcal{P}^*$ is a regular linear functional satisfying
$${\bf D}_{1/q,-\omega/q} (\phi {\bf u})=\psi {\bf u}\,,$$
where $\phi$ and $\psi$ are nonzero polynomials such that $\deg\phi\leq2$ and $\deg\psi=1$, and
setting $\phi (x)=a_0+a_1x+a_2 x^2$, $\psi(x)=b_0+b_1x$, and
$$d_n :=b_1 q^{-n} +a_2 [n]_{q^{-1}}\;,\quad e_n :=b_0 q^{-n} +(a_1-q^{-1}\omega d_n)[n]_{q^{-1}}$$
for each $n=0,1,\ldots$, then the (regularity) conditions $d_{n}\neq0$ and $\phi(-e_n/d_{2n})\neq0$ hold for each $n=0,1,\ldots$ 
and the coefficients of the three-term recurrence relation for the associated monic OPS are given by
\begin{align}
\beta_n & =-q^{-1}\omega[n]_{q^{-1}}+\frac{[n]_{q^{-1}}e_{n-1}}{d_{2n-2}}-\frac{[n+1]_{q^{-1}} e_n}{d_{2n}}\,, \label{coeffDq2}\\
\gamma_{n+1} & =-\frac{q^{-n}[n+1]_{q^{-1}}d_{n-1}}{d_{2n-1}d_{2n+1}}\phi\left(-\frac{e_n}{d_{2n}} \right)\,,\quad n=0,1,2,\ldots. \label{coeffDq1}
\end{align}

\begin{theorem}\label{DGThm}
A monic OPS $(P_n)_{n\geq0}$ satisfies \eqref{Griffin1} if and only if, up to an affine transformation of the variable, it is a $q$-classical monic OPS.
\end{theorem}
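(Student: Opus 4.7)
The plan is to reduce \eqref{Griffin1} to the statement that the orthogonality functional $\mathbf{u}$ is $D_{q,\omega}$-classical, and then to classify such functionals explicitly via the formulas \eqref{coeffDq2}--\eqref{coeffDq1}.

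For the ``if'' direction, recall that every $q$-classical $\mathbf{u}$ satisfies a Pearson equation ${\bf D}_{q,\omega}(\phi\mathbf{u})=\psi\mathbf{u}$ with $\deg\phi\leq 2$ and $\deg\psi=1$; combining this with the three-term recurrence for $(P_n)_{n\geq 0}$ yields (after routine manipulation using Leibniz's formula) a structure relation of the form \eqref{Griffin1}, with $\pi_N$ the monic multiple of $\phi$ (so $N=\deg\phi\leq 2$), and the condition $c_{n,n}\neq 0$ follows from the regularity conditions in Table \ref{table1}. This can be checked family-by-family and is standard.

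For the ``only if'' direction, observe that \eqref{Griffin1} with $P_n=Q_n$ is exactly the statement that $(\mathbf{u},\mathbf{u})$ is a $\pi_N$-$(q,\omega)$-self-coherent pair of index $M=0$ and order $(m,k)=(1,0)$. Since $m=1\geq 1$ (so the hypothesis of Theorem \ref{theorem-main-kzero} is met when $N=0$) and $N\leq 2$, Theorem \ref{theorem-main-kzero} applies with $\mathbf{v}=\mathbf{u}$ and yields
$$
{\bf D}_{1/q,-\omega/q}\bigl(\Phi(\cdot;1)\mathbf{u}\bigr)=\Phi(\cdot;0)\mathbf{u},
$$
with $\deg\Phi(\cdot;0)=M+m=1$ and $\deg\Phi(\cdot;1)\leq M+m+1=2$. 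Hence $\mathfrak{s}(\mathbf{u})\leq M+m-1=0$, so $\mathbf{u}$ is $D_{q,\omega}$-classical. To finish, I would apply the translation $y=x-\omega/(1-q)$, which sends the fixed point of $x\mapsto qx+\omega$ to the origin and converts $D_{q,\omega}$ into the Jackson operator $D_q$; thus, after this affine change, one may assume $\omega=0$. Writing $\phi(x)=a_0+a_1x+a_2x^2$ and $\psi(x)=b_0+b_1x$ for the Pearson polynomials and using \eqref{coeffDq2}--\eqref{coeffDq1}, a case analysis on $\deg\phi\in\{0,1,2\}$ (combined with suitable affine rescalings) shows that the resulting $(\beta_n,\gamma_{n+1})$ coincides with those of the family \textbf{(I)} when $\deg\phi\leq 1$ and with those of \textbf{(II)} when $\deg\phi=2$. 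Remark \ref{remarkLJ} then identifies both families with the entries of Table \ref{table1}, concluding the proof.

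The main obstacle is the final matching of \eqref{coeffDq2}--\eqref{coeffDq1} with the closed forms of $\beta_n,\gamma_{n+1}$ given in \textbf{(I)}--\textbf{(II)}: one must express the parameters $(a,b,c)$ or $(a,b,c,d)$ as explicit functions of $(a_0,a_1,a_2,b_0,b_1)$ and handle carefully the degenerate subcases (such as $a_2=0$, or $\phi$ possessing a double root), each requiring its own affine normalization before the identification becomes manifest.
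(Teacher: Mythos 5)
Your proposal follows essentially the same route as the paper's proof: apply Theorem \ref{theorem-main-kzero} with $M=0$, $(m,k)=(1,0)$ and ${\bf v}={\bf u}$ to get the Pearson equation ${\bf D}_{1/q,-\omega/q}(\pi_N{\bf u})=\Phi(\cdot;0){\bf u}$ with $\deg\Phi(\cdot;0)=1$ (so ${\bf u}$ is $D_{q,\omega}$-classical), then feed this into \eqref{coeffDq2}--\eqref{coeffDq1} and run a case analysis on $N=\deg\pi_N\in\{0,1,2\}$ to match the recurrence coefficients with the families {\bf (I)} and {\bf (II)} and hence with Table \ref{table1}. The only imprecision is in your anticipated degenerate sub-cases for $\deg\phi=2$: the relevant split is not a double root of $\phi$ but whether $d_n=b_1q^{-n}+a_2[n]_{q^{-1}}$ is constant in $n$, and in that constant sub-case the outcome is family {\bf (I)} in base $q^{-1}$ rather than family {\bf (II)} --- a detail absorbed by your acknowledged ``careful handling'' and consistent with the paper's Case~III.
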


\begin{proof}
In the analysis of the structure relation \eqref{Griffin1} we consider the three possible cases,
according to the degree of the (monic) polynomial $\pi_N$, $N\in\{0,1,2\}$.
\medskip

{\sc case i}: $N=0$. Then $\pi_0(x)=1$ and so (\ref{Griffin1}) becomes
$$D_{q,\omega} P_{n+1}(x)=[n+1]_q P_n(x)\;,\quad n=0,1,\ldots\;.$$
From \eqref{EqFuc1}, \eqref{EqFuc2}, and \eqref{TTRRPnCo}, we see that ${\bf u}$ satisfies the functional equation
\begin{align*}
{\bf D}_{1/q,-\omega/q} {\bf u}=-\frac{q}{\gamma_1}(x-\beta_0){\bf u}\,.
\end{align*}
Let $a$ and $b$ be the two roots of the quadratic equation
$$
z^2+(\omega_0-\beta_0)z+\gamma_1/(q-1)=0\,.
$$
Note that $ab\neq0$, $\gamma_1 =ab(q-1)$, and $\beta_0 =a+b+\omega_0$, where
$$\omega_0 :=\frac{\omega}{1-q}\,.$$
Using (\ref{coeffDq2}) and (\ref{coeffDq1}), the recurrence coefficients for the monic OPS $\big(P_n\big)_{n\geq0}$ are 
$$
\beta_n=\omega_0+(a+b)q^n \;,\quad \gamma_{n+1}=-ab(1-q^{n+1})q^n \;,\quad n=0,1,\ldots\,.
$$
This means that
\begin{align*}
P_n (x)=L_n \left( x-\omega_0;a,b,0|q\right)\;,\quad n=0,1,\ldots\, .
\end{align*}
(Thus, according to Remark \ref{remarkLJ2}, in this case, up to affine transformations of the variable, we obtain Al-Salam-Carlitz polynomials.)
\medskip

{\sc case ii}: $N=1$. Writing $\pi_1(x)=x-\omega_0+c$, $c\in\mathbb{C}$, (\ref{Griffin1}) becomes
$$(x-\omega_0+c)P_n^{[1]}(x)=P_{n+1}(x)+c_{n,n} P_n(x)\;,\quad n=0,1,\ldots\;.$$
Setting $n=0$ gives $c_{0,0}=c+\beta_0-\omega_0$, and so condition \eqref{cond1} implies $c+\beta_0\neq\omega_0$.
By \eqref{EqFuc1}, \eqref{EqFuc2}, and \eqref{TTRRPnCo}, we obtain the functional equation
\begin{align*}
{\bf D}_{1/q,-\omega/q}\big((x-\omega_0+c) {\bf u}\big)=-\frac{q(\beta_0-\omega_0+c)}{\gamma_1}(x-\beta_0){\bf u}\,.
\end{align*}
Setting $\alpha=-q(\beta_0-\omega_0+c)/\gamma_1$ (hence $\alpha\neq0$) and $\beta=\alpha(\omega_0-\beta_0)$,
the above functional equation becomes
$${\bf D}_{1/q,-\omega/q}\big((x-\omega_0+c) {\bf u}\big)=\big(\alpha (x-\omega_0)+\beta\big){\bf u}\,,$$
hence, using  (\ref{coeffDq2}) and (\ref{coeffDq1}), we find
\begin{align}
\beta_n &=\omega_0 -\frac{\big(\beta(1-q)+(1+q)(1-q^n)\big)q^{n}}{\alpha (1-q)}\,, \label{betanQQ} \\
\gamma_{n+1} &=\frac{(1-q^{n+1})q^{n+1}\big(\alpha c(1-q)+(q+\beta(1-q))q^n-q^{2n+1}\big)}{\alpha^2 (1-q)^2} \label{gammanQQ}
\end{align}
for each $n=0,1,\ldots$.
Let $a$ and $b$ be the zeros of the polynomial
$$
\theta_2(z):=-q^{-1}z^2+\big(1-\beta(1-q^{-1})\big)z+\alpha c(1-q)\;.
$$
Then $a+b=q+\beta(1-q)$ and $ab=c\alpha q(1-q)$.
Therefore, setting $r:=1/(\alpha(q-1))$,
we have $r\neq0$ and, from \eqref{betanQQ} and \eqref{gammanQQ}, 
\begin{align*}
\beta_n &=\omega_0+rq^{n}\big(a+b+1-q^n-q^{n+1}\big)\,,\\
\gamma_{n+1}&=-r^2q^n\big(1-q^{n+1}\big)\big(a-q^{n+1}\big)\big(b-q^{n+1}\big)
\end{align*}
for each $n=0,1,\ldots$. This means that
\begin{align*}
P_n (x)= r^n L_n \left( \frac{x-\omega_0}{r};a,b,1\Big|q \right) =L_n \left( x-\omega_0;ar,br,r \Big|q \right)\,.
\end{align*}
(Therefore, in this case, up to affine transformations of the variable, we obtain Big-$q$-Laguerre polynomials if $ab\neq0$, Little-$q$-Laguerre polynomials if $ab=0$ and $a$ and $b$ do not vanish simultaneously, and the OPS $(l_n)_{n\geq0}$ if $a=b=0$.)
\medskip

{\sc case iii}: $N=2$. Then we may write $\pi_2(x)=(x-\omega_0 -r)(x-\omega_0 -s)$,  
with $r,s\in\mathbb{C}$, and (\ref{Griffin1}) becomes
$$(x-\omega_0 -r)(x-\omega_0 -s)P_n^{[1]}(x)=P_{n+2}(x)+c_{n,n+1} P_{n+1}(x)+c_{n,n} P_n(x)$$
for each $n=0,1,\ldots$. From (\ref{EqFuc1}) and (\ref{EqFuc2}), we deduce
$${\bf D}_{1/q,-\omega/q} \big((x-\omega_0 -r)(x-\omega_0 -s){\bf u}\big)=(\alpha (x-\omega_0 )+\beta){\bf u} \,,$$
where $\alpha := -q\big(\gamma_1+\pi_2(\beta_0)\big)/\gamma_1$ and $\beta =-\alpha (\beta_0 -\omega_0)$.
The regularity of ${\bf u}$ implies $\alpha\neq0$. Since $(1-q^{-1})d_n = 1+ (-1+\alpha (1-q^{-1}))q^{-n} $, then we will distinguish two sub-cases, depending whether $(d_n)_ {n\geq0}$ is a constant sequence or not.

{\sc case iii}.a) If $\alpha = 1/(1-q^{-1})$, then $d_n =\alpha$ for all $n$. Let $c:=(q-1)\beta+q(r+s)$. 
By using (\ref{coeffDq2})--(\ref{coeffDq1}) we find $$P_n (x) =L_n \left( x-\omega_0;r,s,c |q^{-1}\right)\;, \quad n=0,1,\ldots.$$
(Therefore, in this case, we obtain Al-Salam-Carlitz polynomials if $c=0$, i.e., $\beta_0=\omega_0+r+s$, Big-$q$-Laguerre polynomials if $rs\neq0$, Little-$q$-Laguerre polynomials if $rs=0$ and $r$ and $s$ do not vanish simultaneously, and the OPS $(l_n)_{n\geq0}$ if $r=s=0$.) 

{\sc case iii}.b) If $\alpha \neq 1/(1-q^{-1})$, then $(d_n)_ {n\geq0}$ is not a constant sequence. 
Let $$u:=q(q+\alpha(1-q))\;,\quad \lambda :=(r+s)q-\beta (1-q)\,.$$
Note that $u\neq0$ (since $\alpha\neq 1/(1-q^{-1})$). Note also that 
$d_n=(1-uq^{-n-2})/(1-q^{-1})$ for each $n=0,1,\ldots$ and so, since $d_n\neq0$, we obtain $u\neq q^n$ for each $n=0,1,\ldots$.
Therefore, using (\ref{coeffDq2})--(\ref{coeffDq1}), we obtain
\begin{align}
\beta_n =  \omega_0 + q^{-n} \frac{(\lambda+r+s)(1+uq^{-2n-1})-(1+q^{-1})(\lambda +ru+su)q^{-n}}{(1-uq^{-2n})(1-uq^{-2n-2})}  \label{coefbeta} 
\end{align}
and
\begin{equation}\label{coefgamma} 
\gamma_{n+1} =  -\frac{q^{-n}(1-q^{-n-1})(1-uq^{-n-1})\varphi(q^{-n-1};r,s)\varphi(q^{-n-1};s,r)}{(1-uq^{-2n-1})(1-uq^{-2n-2})^2 (1-uq^{-2n-3})}
\end{equation}
for each $n=0,1,\ldots$, where
$$
\varphi(z;x,y):=xu z^2 -\lambda z +y\,.
$$
If $r=\lambda=0$ then $\varphi(z;r,s)=s$ and $\varphi(z;s,r)=suz^2$. Then, from (\ref{coefbeta})--(\ref{coefgamma})
we see that this case may occur only if $s\neq0$ and 
$$P_n (x)= J_n \left( x- \omega_0;0,0,s,u|q^{-1}\right)\;,\quad n=0,1,\ldots.$$
(This means that, in this case, the $P_n$'s are $q$-Bessel polynomials.)
If $r=0$ and $\lambda\neq0$, define $a=\lambda/u$ and $b=us/\lambda$;
and if $r\neq0$ ($\lambda$ being zero or not),  define
$a=(\lambda+\sqrt{\Delta})/(2u)$ and $b=(\lambda-\sqrt{\Delta})/(2r)$, where $\Delta:=\lambda^2-4rsu$
(alternatively, we may choose $a=(\lambda-\sqrt{\Delta})/(2u)$ and $b=(\lambda+\sqrt{\Delta})/(2r)$).
These choices of $a$ and $b$ (in either cases $r=0$ and $\lambda\neq0$, or $r\neq0$) give $s=ab$ and $\lambda =au+br$, and so
$$
\varphi(z;r,s)=(rz-a)(u z-b)\;,\quad  \varphi(z;s,r)=(auz-r)(b z-1)\,.
$$
Therefore, using (\ref{coefbeta})--(\ref{coefgamma}), we obtain
$$P_n (x)= J_n \left( x- \omega_0;a,b,r,u|q^{-1}\right)\;,\quad n=0,1,\ldots.$$
(In this case, if $r=0$ (and so $a\neq0$) we obtain Little-$q$-Jacobi polynomials if $b\neq0$ and the OPS $(j_n)_n$ if $b=0$; and if $r\neq0$, we obtain Big-$q$-Jacobi polynomials if $a,b\neq0$, Little-$q$-Jacobi polynomials if $ab=0$ and $a$ and $b$ do not vanish simultaneously, and $q-$Bessel polynomials if $a=b=0$.) 
\end{proof}

\section*{Final remarks}

This paper highlights that the concept of coherent pair of measures, besides its own theoretical interest, is a useful tool to deal with specific algebraic problems in the theory of orthogonal polynomials. This is shown in Section 3 (Theorem 3.1) by proving the $(q,\omega)$-analogue of the Al-Salam and Chihara characterization of the classical OPS \cite{ASC1972}. Moreover, Theorem 1.3 when $\omega=0$ corrects a miss-stated result from \cite{G2016} where two additional families of monic OPS were missed, namely the polynomials $l_n(\cdot;a|q)$ and $j_n(\cdot;a,b|q)$ mentioned in Table \ref{table1}. It is worth mentioning that several problems and conjectures related with this type of structure relations remain unsolved (see \cite[Section 24.7.1]{IsmailBook2005}). For instance, the following conjecture was posed by M. E. H Ismail in his monograph on Orthogonal Polynomials and Special Functions published in 2005 (see \cite[Conjecture 24.7.7 ]{IsmailBook2005}) and revised in 2009:

\begin{conj}\label{conj}
If an OPS $(p_n)_{n\geq0}$ satisfies
$$
\pi(x) D_{q,0} p_n(x)=\sum_{k=-r}^s c_{n, k} p_{n+k}(x),
$$
for some positive integers $r$ and $s$, and a polynomial $\pi(x)$ which does not depend on $n$, then we have the orthogonality relation 
$$
\int_a^b p_m(x) p_n(x)w(x)\mathrm{d}_q x=0, \quad m\not=n, \quad m=0,1,\ldots,
$$
$w$ being a function which is q-integrable on the interval $(a, b)$ and which satisfy the equation 
$$
D_{q,0} w(x)=-u(q x) w(q x),
$$
where $u$ is a rational function.
\end{conj}

Note that the results of the previous section (in the case $\omega=0$) give more that a simple answer to a particular case of the above problem. Indeed, by Theorem \ref{DGThm}, we known that, up to affine transformations of variable, the only monic OPS satisfying the hypotheses of Conjecture \ref{conj}, with $\pi(x)$ being a polynomial of degree at most $2$, $r=s$, and $c_{n, -r}\not=0$, are the monic $q$-classical OPS given in Table \ref{table1}, which, according to Remark \ref{remarkLJ2}, \ref{remarkLJ} and \ref{final}, are special or limiting cases of a unique family, $(J_n (x;a,b,c,d|q))_{n\geq 0}$. This gives positive answer to Conjecture \ref{conj} under our restrictions, after rewrite the Pearson equation for the $q$-classical OPS.

\section*{Acknowledgements }
This work is supported by the Centre for Mathematics of the University of Coimbra - UIDB/00324/2020, funded by the Portuguese Government through FCT/ MCTES. RAN was partially supported by PGC2018-096504-B-C31 (FEDER(EU)/ Ministerio de Ciencia e Innovaci\'on-Agencia Estatal de Investigaci\'on), FQM-262 and Feder-US-1254600 (FEDET(EU)/Jun\-ta de Anda\-lu\-c\'ia).
DM is partially supported by the Centre for Mathematics of the University of Coimbra, funded by the Portuguese Government through FCT grant PD/BD/135295/2017.

\end{document}